\documentclass{amsart}


\newif\ifdraft
\draftfalse
\usepackage{amsmath,amsfonts,amsthm,mathrsfs}
\usepackage{amssymb}

\usepackage[unicode,bookmarks]{hyperref}
\usepackage[usenames,dvipsnames]{xcolor}
\hypersetup{colorlinks=true,citecolor=NavyBlue,linkcolor=BrickRed,urlcolor=Orange}

\usepackage[alphabetic,initials]{amsrefs}


\usepackage{enumitem}

\usepackage{chngcntr}

\ifdraft
\usepackage[notcite,notref,color]{showkeys}

\definecolor{labelkey}{gray}{0.5}
\fi

\usepackage{tikz}

\usepackage{tikz-cd}
\tikzset{commutative diagrams/arrow style=math font}

\usetikzlibrary{matrix,arrows}
\newlength{\myarrowsize} 

\pgfarrowsdeclare{cmto}{cmto}{
	\pgfsetdash{}{0pt} 
	\pgfsetbeveljoin 
	\pgfsetroundcap 
	\setlength{\myarrowsize}{0.6pt}
	\addtolength{\myarrowsize}{.5\pgflinewidth}
	\pgfarrowsleftextend{-4\myarrowsize-.5\pgflinewidth} 
	\pgfarrowsrightextend{.8\pgflinewidth}
}{
	\setlength{\myarrowsize}{0.6pt} 
  	\addtolength{\myarrowsize}{.5\pgflinewidth}  
	\pgfsetlinewidth{0.5\pgflinewidth}
	\pgfsetroundjoin
	\pgfpathmoveto{\pgfpoint{1.5\pgflinewidth}{0}}
	\pgfpatharc{-109}{-170}{4\myarrowsize}
	\pgfpatharc{10}{189}{0.58\pgflinewidth and 0.2\pgflinewidth}
	\pgfpatharc{-170}{-115}{4\myarrowsize+\pgflinewidth}
	\pgfpathclose
	\pgfusepathqfillstroke
	\pgfpathmoveto{\pgfpoint{1.5\pgflinewidth}{0}}
	\pgfpatharc{109}{170}{4\myarrowsize}
	\pgfpatharc{-10}{-189}{0.58\pgflinewidth and 0.2\pgflinewidth}
	\pgfpatharc{170}{115}{4\myarrowsize+\pgflinewidth}
	\pgfpathclose
	\pgfusepathqfillstroke
	\pgfsetlinewidth{2\pgflinewidth}
}

\pgfarrowsdeclare{cmonto}{cmonto}{
	\pgfsetdash{}{0pt} 
	\pgfsetbeveljoin 
	\pgfsetroundcap 
	\setlength{\myarrowsize}{0.6pt}
	\addtolength{\myarrowsize}{.5\pgflinewidth}
	\pgfarrowsleftextend{-4\myarrowsize-.5\pgflinewidth} 
	\pgfarrowsrightextend{.8\pgflinewidth}
}{
	\setlength{\myarrowsize}{0.6pt} 
  	\addtolength{\myarrowsize}{.5\pgflinewidth}  
	\pgfsetlinewidth{0.5\pgflinewidth}
	\pgfsetroundjoin
	\pgfpathmoveto{\pgfpoint{1.5\pgflinewidth}{0}}
	\pgfpatharc{-109}{-170}{4\myarrowsize}
	\pgfpatharc{10}{189}{0.58\pgflinewidth and 0.2\pgflinewidth}
	\pgfpatharc{-170}{-115}{4\myarrowsize+\pgflinewidth}
	\pgfpathclose
	\pgfusepathqfillstroke
	\pgfpathmoveto{\pgfpoint{1.5\pgflinewidth}{0}}
	\pgfpatharc{109}{170}{4\myarrowsize}
	\pgfpatharc{-10}{-189}{0.58\pgflinewidth and 0.2\pgflinewidth}
	\pgfpatharc{170}{115}{4\myarrowsize+\pgflinewidth}
	\pgfpathclose
	\pgfusepathqfillstroke
	\pgfpathmoveto{\pgfpoint{1.5\pgflinewidth-0.3em}{0}}
	\pgfpatharc{-109}{-170}{4\myarrowsize}
	\pgfpatharc{10}{189}{0.58\pgflinewidth and 0.2\pgflinewidth}
	\pgfpatharc{-170}{-115}{4\myarrowsize+\pgflinewidth}
	\pgfpathclose
	\pgfusepathqfillstroke
	\pgfpathmoveto{\pgfpoint{1.5\pgflinewidth-0.3em}{0}}
	\pgfpatharc{109}{170}{4\myarrowsize}
	\pgfpatharc{-10}{-189}{0.58\pgflinewidth and 0.2\pgflinewidth}
	\pgfpatharc{170}{115}{4\myarrowsize+\pgflinewidth}
	\pgfpathclose
	\pgfusepathqfillstroke
	\pgfsetlinewidth{2\pgflinewidth}
}

\pgfarrowsdeclare{cmhook}{cmhook}{
	\pgfsetdash{}{0pt} 
	\pgfsetbeveljoin 
	\pgfsetroundcap 
	\setlength{\myarrowsize}{0.6pt}
	\addtolength{\myarrowsize}{.5\pgflinewidth}
	\pgfarrowsleftextend{-4\myarrowsize-.5\pgflinewidth} 
	\pgfarrowsrightextend{.8\pgflinewidth}
}{
	\setlength{\myarrowsize}{0.6pt} 
  	\addtolength{\myarrowsize}{.5\pgflinewidth}  
 	\pgfsetdash{}{0pt}
	\pgfsetroundcap
	\pgfpathmoveto{\pgfqpoint{0pt}{-4.667\pgflinewidth}}
	\pgfpathcurveto
    {\pgfqpoint{4\pgflinewidth}{-4.667\pgflinewidth}}
    {\pgfqpoint{4\pgflinewidth}{0pt}}
    {\pgfpointorigin}
	\pgfusepathqstroke
}


\newenvironment{diagram*}[2]{%
\[%
\begin{tikzpicture}[>=cmto,baseline=(current bounding box.center),%
	to/.style={->,font=\scriptsize,cap=round},%
	into/.style={cmhook->,font=\scriptsize,cap=round},%
	onto/.style={-cmonto,font=\scriptsize,cap=round},%
	math/.style={matrix of math nodes, row sep=#2, column sep=#1,%
		text height=1.5ex, text depth=0.25ex}]%
}{%
\end{tikzpicture}%
\]%
\ignorespacesafterend%
}

%



\newcommand{\Dmod}{\mathscr{D}}
\newcommand{\Mmod}{\mathcal{M}}

\newcommand{\derR}{\mathbf{R}}

\newcommand{\shH}{\mathcal{H}}


\newcommand{\tensor}{\otimes}



\newcommand{\ZZ}{\mathbb{Z}}
\newcommand{\QQ}{\mathbb{Q}}

\newcommand{\CC}{\mathbb{C}}




\DeclareMathOperator{\gr}{gr}
\DeclareMathOperator{\DR}{DR}




\newcommand{\shf}[1]{\mathscr{#1}}
\newcommand{\OX}{\shf{O}_X}



\def\overbar#1#2#3{{%
	\setbox0=\hbox{\displaystyle{#1}}%
	\dimen0=\wd0
	\advance\dimen0 by -#2 
	\vbox {\nointerlineskip \moveright #3 \vbox{\hrule height 0.3pt width \dimen0}%
		\nointerlineskip \vskip 1.5pt \box0}%
}}




\newcommand{\shF}{\shf{F}}
\newcommand{\shG}{\shf{G}}
\newcommand{\shE}{\shf{E}}

\newcommand{\shO}{\shf{O}}

\makeatletter
\let\@@seccntformat\@seccntformat
\renewcommand*{\@seccntformat}[1]{%
  \expandafter\ifx\csname @seccntformat@#1\endcsname\relax
    \expandafter\@@seccntformat
  \else
    \expandafter
      \csname @seccntformat@#1\expandafter\endcsname
  \fi
    {#1}%
}
\newcommand*{\@seccntformat@subsection}[1]{%
  \textbf{\csname the#1\endcsname.}
}
\makeatother

\makeatletter
\let\@paragraph\paragraph
\renewcommand*{\paragraph}[1]{%
	\vspace{0.3\baselineskip}%
	\@paragraph{\textit{#1}}%
}
\makeatother

\counterwithin{equation}{subsection}
\counterwithout{subsection}{section}
\counterwithin{figure}{subsection}

\newtheorem{theorem}[equation]{Theorem}
\newtheorem*{theorem*}{Theorem}
\newtheorem{lemma}[equation]{Lemma}
\newtheorem*{lemma*}{Lemma}
\newtheorem{corollary}[equation]{Corollary}
\newtheorem{proposition}[equation]{Proposition}
\newtheorem*{proposition*}{Proposition}

\theoremstyle{definition}
\newtheorem{definition}[equation]{Definition}
\newtheorem*{definition*}{Definition}
\theoremstyle{remark}
\newtheorem{remark}[equation]{Remark}

\newtheorem*{example*}{Example}
\newtheorem*{problem*}{Problem}

\theoremstyle{plain}

\newcommand{\theoremref}[1]{\hyperref[#1]{Theorem~\ref*{#1}}}
\newcommand{\lemmaref}[1]{\hyperref[#1]{Lemma~\ref*{#1}}}
\newcommand{\definitionref}[1]{\hyperref[#1]{Definition~\ref*{#1}}}
\newcommand{\propositionref}[1]{\hyperref[#1]{Proposition~\ref*{#1}}}
\newcommand{\conjectureref}[1]{\hyperref[#1]{Conjecture~\ref*{#1}}}
\newcommand{\corollaryref}[1]{\hyperref[#1]{Corollary~\ref*{#1}}}
\newcommand{\exampleref}[1]{\hyperref[#1]{Example~\ref*{#1}}}

\makeatletter
\let\old@caption\caption
\renewcommand*{\caption}[1]{%
	\setcounter{figure}{\value{equation}}%
	\stepcounter{equation}%
	\old@caption{#1}\relax%
}
\makeatother

\newcounter{intro}

\newtheorem{intro-conjecture}[intro]{Conjecture}
\newtheorem{intro-corollary}[intro]{Corollary}
\newtheorem{intro-theorem}[intro]{Theorem}



\newcommand{\parref}[1]{\hyperref[#1]{\S\ref*{#1}}}

\makeatletter
\newcommand*\if@single[3]{%
  \setbox0\hbox{${\mathaccent"0362{#1}}^H$}%
  \setbox2\hbox{${\mathaccent"0362{\kern0pt#1}}^H$}%
  \ifdim\ht0=\ht2 #3\else #2\fi
  }
\newcommand*\rel@kern[1]{\kern#1\dimexpr\macc@kerna}
\newcommand*\widebar[1]{\@ifnextchar^{{\wide@bar{#1}{0}}}{\wide@bar{#1}{1}}}
\newcommand*\wide@bar[2]{\if@single{#1}{\wide@bar@{#1}{#2}{1}}{\wide@bar@{#1}{#2}{2}}}
\newcommand*\wide@bar@[3]{%
  \begingroup
  \def\mathaccent##1##2{%
    \if#32 \let\macc@nucleus\first@char \fi
    \setbox\z@\hbox{$\macc@style{\macc@nucleus}_{}$}%
    \setbox\tw@\hbox{$\macc@style{\macc@nucleus}{}_{}$}%
    \dimen@\wd\tw@
    \advance\dimen@-\wd\z@
    \divide\dimen@ 3
    \@tempdima\wd\tw@
    \advance\@tempdima-\scriptspace
    \divide\@tempdima 10
    \advance\dimen@-\@tempdima
    \ifdim\dimen@>\z@ \dimen@0pt\fi
    \rel@kern{0.6}\kern-\dimen@
    \if#31
      \overline{\rel@kern{-0.6}\kern\dimen@\macc@nucleus\rel@kern{0.4}\kern\dimen@}%
      \advance\dimen@0.4\dimexpr\macc@kerna
      \let\final@kern#2%
      \ifdim\dimen@<\z@ \let\final@kern1\fi
      \if\final@kern1 \kern-\dimen@\fi
    \else
      \overline{\rel@kern{-0.6}\kern\dimen@#1}%
    \fi
  }%
  \macc@depth\@ne
  \let\math@bgroup\@empty \let\math@egroup\macc@set@skewchar
  \mathsurround\z@ \frozen@everymath{\mathgroup\macc@group\relax}%
  \macc@set@skewchar\relax
  \let\mathaccentV\macc@nested@a
  \if#31
    \macc@nested@a\relax111{#1}%
  \else
    \def\gobble@till@marker##1\endmarker{}%
    \futurelet\first@char\gobble@till@marker#1\endmarker
    \ifcat\noexpand\first@char A\else
      \def\first@char{}%
    \fi
    \macc@nested@a\relax111{\first@char}%
  \fi
  \endgroup
}
\makeatother




\newcommand{\Q}{\mathbb{Q}}

\newcommand{\V}{\mathbb{V}}

\newcommand{\Z}{\mathbb{Z}}

\newcommand{\cH}{\mathcal{H}}

\newcommand{\cL}{\mathcal{L}}

\newcommand{\cV}{\mathcal{V}}
\newcommand{\cW}{\mathcal{W}}

\newcommand{\bV}{\mathbf{V}}

\newcommand{\fddot}{F_\bullet}
\newcommand{\rarr}{\longrightarrow}

\begin{document}

\vspace{\baselineskip}

\title{Weak positivity for Hodge modules}

\author{Mihnea Popa}
\address{Department of Mathematics, Northwestern University,
2033 Sheridan Road, Evanston, IL 60208, USA} 
\email{\tt mpopa@math.northwestern.edu}

\author{Lei Wu}
\address{Department of Mathematics,  Northwestern University,
2033 Sheridan Road, Evanston, IL 60208, USA}
\email{\tt leiwu2014@u.northwestern.edu}

\subjclass[2000]{14D07; 14F10, 14F17}

\thanks{MP was partially supported by the NSF grant DMS-1405516}


\setlength{\parskip}{0.19\baselineskip}

\maketitle

\begin{abstract}
We prove the weak positivity of the kernels of Kodaira-Spencer-type maps for pure Hodge module
extensions of generically defined variations of Hodge structure.
\end{abstract}


\subsection{Introduction}

Let $X$ be a smooth projective complex variety, and $U \subseteq X$ a dense open subset. It is of fundamental importance that (extensions of) variations of Hodge structure on $U$ come with inherent positivity properties. This study was initiated by Griffiths, who proved when  $U= X$ that the lowest term in the Hodge filtration has a semi-positive definite metric, so in particular is a nef vector bundle. This fact was extended by Fujita \cite{Fujita} (when $X$ is a curve) and Kawamata \cite{Kawamata} to include the case when $D = X - U$ is a simple normal crossings divisor, and the variation has unipotent monodromy along its components. Generalizations of this result were provided in recent work by Fujino-Fujisawa \cite{FF} and Fujino-Fujisawa-Saito \cite{FFS}. It is possible, and very useful for applications, to see these results as part of a wider picture involving all kernels of Kodaira-Spencer type maps associated to meromorphic connections with log-poles and unipotent monodromy. This was first considered by Zuo \cite{Zuo}, while recently a detailed study has been provided by Brunebarbe \cite{Brunebarbe1}; see Section \ref{log-WP}.

In this paper we give a further extension to the setting of Hodge modules. This is very convenient when dealing with arbitrary 
families of varieties; see for instance \cite{PS}, where the present results are used towards proving Viehweg's hyperbolicity conjecture for families of maximal variation, and more generally to put constrains on the spaces on which certain geometrically relevant Hodge modules can exist.

Let $\bV$ be a polarizable variation of Hodge structure on $U$, with quasi-unipotent local monodromies. By a fundamental theorem of M. Saito  \cite[\S3.b]{Saito-MHM}, $\bV$ admits a unique pure Hodge module extension $M$ with strict support $X$; conversely, any pure Hodge module with strict support $X$ is generically a variation of Hodge structure. See Section \ref{background} for further background.

We consider in particular the filtered left $\Dmod_X$-module $(\Mmod, F_{\bullet})$ underlying $M$. For each $p$, we have a natural Kodaira-Spencer type  $\shO_X$-module homomorphism
$$\theta_p:  {\rm gr}_p^F \Mmod \longrightarrow {\rm gr}_{p+1}^F \Mmod \otimes \Omega_X^1$$
induced by the $\Dmod$-module structure, and we denote 
$$K_p (M) : = {\rm ker}~ \theta_p.$$

\begin{intro-theorem}\label{thm:WP}
If $M$ is a polarizable pure Hodge module with strict support $X$, then the torsion-free sheaf $K_p (M)^\vee$ is weakly positive 
for any $p$.
\end{intro-theorem}

In the case when $D$ is a simple normal crossings divisor, and $\bV$ has unipotent monodromy along its 
components,  we will see in Section \ref{normal_crossings} that there is a close relationship between $K_p (M)$ and $K_p (\cV^{\ge 0})$, where $\cV^{\ge 0}$ is the Deligne canonical extension of $\bV$ and 
$K_p (\cV^{\ge 0})$ is defined  analogously. For this, the 
results of \cite{Zuo} and \cite{Brunebarbe}  on logarithmic connections can be applied to deduce weak positivity, as explained
in Section \ref{log-WP}. We will then proceed in Sections \ref{normal_crossings} and \ref{general} by successive reductions 
to this case, using some of the main results from Saito's theory.

A seemingly different weak positivity result was proved by Schnell and the first author using Kodaira-Saito vanishing, as explained in \cite[Theorem 1.4]{Schnell1} and \cite[Theorem 10.4]{Popa}: it states that the lowest non-zero graded piece $F_{{\rm low}} \Mmod$ in the filtration on a Hodge module extending a generic variation of Hodge  structure is weakly positive. (This includes Viehweg's result on $f_* \omega_{X/Y}$ for a morphism $f:X \rightarrow Y$ of smooth projective varieties.)  Using duality, one can in fact deduce this as a special case of \theoremref{thm:WP}; indeed, it is observed in \cite{Suh}, see also the end of \cite[Section 4]{Wu}, that $F_{{\rm low}} \Mmod$ can be related to the dual of ${\rm gr}_F^{{\rm top}} \Mmod(*D)$, the 
top non-zero graded piece of the localization of $\Mmod$ along $D$, which in turn coincides with $K_{{\rm top}} (M)$.\footnote{This is the analogue of deducing the Fujita-Kawamata semipositivity results as special cases of the result of Brunebarbe and Zuo cited above.} It would be interesting to relate \theoremref{thm:WP} to vanishing theorems as well.

\subsection{Background material}\label{background}
In this section we review basic terminology and facts regarding weak positivity, filtered $\Dmod$-modules, and Hodge modules.

\noindent
{\bf Weak positivity.}
We start by recalling the notion of weak positivity introduced by Viehweg \cite{Viehweg1}; it is a higher rank analogue of the notion of a pseudo-effective line bundle, known to have numerous important applications to birational geometry.

\begin{definition}
Let $X$ be a smooth quasi-projective variety. A torsion-free coherent sheaf $\shF$ on $X$ is \emph{weakly positive over
an open set $U \subseteq X$} if for every integer $\alpha > 0$ and every ample line bundle $H$ on $X$, there exists 
an integer $\beta > 0$ such that 
$$\widehat{S}^{\alpha \beta} \shF \otimes H^{\otimes \beta}$$
is generated by global sections at each point of $U$. It is simply called \emph{weakly positive} if such an open set $U$ exists.
Here the notation $\widehat{S}^k \shF$ stands for the reflexive hull of the sheaf $S^k \shF$.
\end{definition}

The following basic lemma will be used for detecting weak positivity.

\begin{lemma}[{\cite[Lemma 1.4]{Viehweg1}}]\label{wplemma}
Let $\shF$ and $\shG$ be torsion-free coherent sheaves on $X$. Then the following hold:

\noindent
(1) If $\shF \rightarrow \shG$ is surjective over $U$, and if $\shF$ is weakly positive over $U$, then $\shG$ is weakly positive over $U$. 

\noindent
(2) If $f: X\rightarrow Y$ is a birational morphism such that $f|_U$ is an isomorphism, and $E$ is a divisor supported on the exceptional locus of $f$ such that $\shF\otimes\shO_X(E)$ weakly positive over $U$, then $f_*\shF$ is weakly positive over $f(U)$.

\noindent
(3) If $\pi: X' \rightarrow X$ is a finite morphism and $\pi^*\shF$ is weakly positive over $\pi^{-1}(U)$, 
then $\shF$ is weakly positive over $U$. 
\end{lemma}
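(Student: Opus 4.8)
The plan is to read off all three statements directly from the definition of weak positivity, by propagating the global‑generation condition through, respectively, symmetric powers, pushforward along a finite morphism, and pushforward along a birational morphism. The common device is: for a given $\alpha>0$ and ample $H$, use the hypothesis to produce a surjection from a trivial bundle (suitably twisted) onto the relevant reflexive symmetric power over the relevant open set, and then move this surjection around using right‑exactness of $\shF\mapsto S^m\shF$ and $\shF\mapsto\shF\otimes H^{\beta}$, the projection formula, and --- in characteristic zero --- trace splittings. Throughout I will use freely that $\widehat{S}^{m}(-)$, reflexive hull, pullback, and finite pushforward all commute away from a closed subset of codimension $\ge 2$; since weak positivity is a condition tested on a dense open subset, one may harmlessly shrink $U$ to the locus where the sheaves in play are locally free, where these operations agree on the nose.

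\emph{Part (1).} The functor $S^m$ is a quotient of $(-)^{\otimes m}$, hence right exact, so the surjection $\shF\twoheadrightarrow\shG$ over $U$ gives a surjection $S^m\shF\twoheadrightarrow S^m\shG$ over $U$. Postcomposing with $S^m\shG\to\widehat{S}^m\shG$ and using that $\Hom(S^m\shF,\shE)=\Hom(\widehat{S}^m\shF,\shE)$ for reflexive $\shE$ (single dual $=$ triple dual), this map factors through $\widehat{S}^m\shF\to\widehat{S}^m\shG$, which is again surjective over the locally free locus of $\shF$ and $\shG$ --- in particular over $U$. Taking $m=\alpha\beta$, twisting by $H^{\beta}$, and passing to sections, a point of $U$ at which $\widehat{S}^{\alpha\beta}\shF\otimes H^{\beta}$ is globally generated is one at which $\widehat{S}^{\alpha\beta}\shG\otimes H^{\beta}$ is globally generated; so the $\beta$ furnished by the weak positivity of $\shF$ for $(\alpha,H)$ works for $\shG$.

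\emph{Part (3).} By miracle flatness the finite map $\pi$ between smooth varieties is flat, and in characteristic zero the normalized trace $\tfrac{1}{\deg\pi}\tr\colon\pi_{*}\OXp\to\OX$ splits $\OX\hookrightarrow\pi_{*}\OXp$; tensoring, $\shF$ (and any twist of it) is a direct summand of $\pi_{*}\pi^{*}\shF$. Now fix $\alpha>0$ and $H$ ample on $X$; then $\pi^{*}H$ is ample on $X'$, so by weak positivity of $\pi^{*}\shF$ there is $\beta>0$ and a surjection $\OXp^{\oplus N}\twoheadrightarrow\widehat{S}^{\alpha\beta}(\pi^{*}\shF)\otimes(\pi^{*}H)^{\beta}=\pi^{*}\bigl(\widehat{S}^{\alpha\beta}\shF\otimes H^{\beta}\bigr)$ over $\pi^{-1}(U)$ (on the locally free locus). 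Applying the exact functor $\pi_{*}$ and the projection formula gives a surjection $(\pi_{*}\OXp)^{\oplus N}\twoheadrightarrow\bigl(\widehat{S}^{\alpha\beta}\shF\otimes H^{\beta}\bigr)\otimes\pi_{*}\OXp$ over $U$; twisting by $H^{c_{0}}$, where $c_{0}$ depends only on $\pi$ and $H$ and is chosen (Serre vanishing) so that $\pi_{*}\OXp\otimes H^{c_{0}}$ is globally generated, makes the source globally generated, hence the target globally generated over $U$; splitting off the trace factor yields global generation over $U$ of $\widehat{S}^{\alpha\beta}\shF\otimes H^{\beta+c_{0}}$. A short manipulation --- replacing $\alpha\beta$ by a common multiple and using that $\widehat{S}^{k}$ of a globally generated sheaf is globally generated, so global generation at one exponent pair propagates --- converts this into weak positivity of $\shF$ over $U$.

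\emph{Part (2).} This runs by the same machine with $\pi$ replaced by the birational $f$. Enlarging $E$ if necessary (weak positivity over $U$ of $\shF\otimes\shO_X(E)$ passes to $\shF\otimes\shO_X(E')$ for effective $E'\ge E$ supported on $\mathrm{Exc}(f)$, via the evaluation argument of Part (1) applied to the inclusion, an isomorphism over $U$), assume $E$ effective. Given $H_{Y}$ ample on $Y$ and $\alpha>0$, fix an effective $f$-exceptional $E_{0}$ and $k_{0}>0$ with $f^{*}H_{Y}^{k_{0}}(-E_{0})$ ample on $X$; apply weak positivity of $\shF\otimes\shO_X(E)$ relative to this bundle, twist the resulting surjection back by $\shO_X(\beta E_{0})$, and push forward by $f$ --- exact over $f(U)$, where $f$ is an isomorphism, with $f_{*}\shO_X(\beta E_{0})=\OY$ since $E_{0}$ is effective and exceptional. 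This shows that $f_{*}$ of a suitable reflexive symmetric power of $\shF\otimes\shO_X(E)$, twisted by a power of $H_{Y}$, is globally generated over $f(U)$. The remaining --- and genuinely technical --- point is to transfer this to the true reflexive symmetric powers of $f_{*}\shF$ on $Y$: the two sheaves agree on $Y\setminus f(\mathrm{Exc}(f))$, the complement of a codimension‑$\ge 2$ set, and since $\widehat{S}^{m}(f_{*}\shF)$ is reflexive, hence $S_{2}$, its sections are computed off this set, so the relevant sections of $f_{*}\widehat{S}^{m}(\shF\otimes\shO_X(E))\otimes H_{Y}^{\beta}$ embed into those of $\widehat{S}^{m}(f_{*}\shF)\otimes H_{Y}^{\beta}$ compatibly with evaluation at points of $f(U)$, upgrading global generation of the former over $f(U)$ to that of the latter; with the same parameter bookkeeping as in Part (3) this gives weak positivity of $f_{*}\shF$ over $f(U)$. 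I expect this bookkeeping --- reabsorbing the fixed twist coming from the error sheaf ($\pi_{*}\OXp$, resp.\ $f_{*}\OX$ and the exceptional divisor) into the scaling freedom of the definition, together with the $S_{2}$/codimension‑two comparisons --- to be the only real obstacle; the remaining ingredients (right‑exactness, the projection formula, the trace splitting, Serre vanishing, and the existence of $E_{0},k_{0}$) are standard.
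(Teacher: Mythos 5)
The paper offers no proof of this lemma --- it is quoted directly from Viehweg (\cite[Lemma 1.4]{Viehweg1}) --- so there is nothing internal to compare your argument against; what you have written is, in outline, the standard proof, essentially Viehweg's own. The three mechanisms you identify are the right ones: right-exactness of $S^m$ plus evaluation of sections for (1); the trace splitting of $\shO_X\to\pi_*\shO_{X'}$, the projection formula, and absorption of the fixed twist coming from $\pi_*\shO_{X'}$ into the $\alpha\beta$-scaling for (3); and the comparison of $f_*\widehat{S}^m(\shF\otimes\shO_X(E))$ with $\widehat{S}^m(f_*\shF)$ across the codimension-two set $f(\mathrm{Exc}(f))$ for (2). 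Two caveats are worth recording. First, as you yourself flag, the argument only controls the maps $\widehat{S}^m\shF\to\widehat{S}^m\shG$ and $\pi^*\widehat{S}^m\shF\to\widehat{S}^m(\pi^*\shF)$ on the locus where the sheaves are locally free, so strictly speaking you obtain weak positivity over $U$ intersected with that locus rather than over the stated $U$; this matches Viehweg's own convention (he builds the locally-free hypothesis on $U$ into the definition) and is harmless for this paper, where the lemma is only ever used to produce weak positivity over \emph{some} dense open set, but it falls slightly short of the literal statement for an arbitrary $U$. Second, in (3) the trace splitting requires $\pi$ to be surjective --- the statement is false for a non-dominant finite morphism, since then $\pi^*\shF$ carries no information --- so that hypothesis, omitted from the statement, must be read in (it holds for the Kawamata covers to which the lemma is applied). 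The bookkeeping you defer at the end of (2) and (3), namely replacing $H$ by a very ample multiple and rescaling $\alpha$ to reabsorb the fixed twist $H^{c_0}$, is routine and does close the argument.
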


\noindent
{\bf Filtered $\Dmod$-modules and the de Rham complex.}
Let $X$ be a complex manifold, or a smooth complex algebraic variety, of dimension $n$. If $(\Mmod, \fddot)$ is a filtered left 
$\Dmod_X$-module, then the filtered de Rham complex of $(\Mmod, \fddot)$ is 
\[\text{DR}(\Mmod):=[\Mmod \rarr \Mmod \otimes \Omega^1_X  \rarr \cdots \rarr \Mmod \otimes \Omega^n_X] [n],\]
with filtration given by
\[F_p\text{DR}(\Mmod):=[F_p \Mmod \rarr F_{p+1} \Mmod \otimes \Omega^1_X \rarr \cdots \rarr F_{p+n}\Mmod \otimes \Omega^n_X][n].\]
The associated graded complexes for this filtration are 
\[\gr^F_p\text{DR}(\Mmod):=[\gr^F_p \Mmod \rarr \gr^F_{p+1} \Mmod \otimes \Omega^1_X \rarr \cdots \rarr \gr^F_{p+n}\Mmod \otimes \Omega^n_X][n].\]
These are complexes of coherent $\shO_X$-modules, placed in degrees $-n, \ldots, 0$.

\begin{definition}
The \emph{Kodaira-Spencer kernels} of the filtered $\Dmod_X$-module $(\Mmod, \fddot)$ are the coherent sheaves 
$$K_p (\Mmod): = {\rm ker} \big( \theta_p:  {\rm gr}_p^F \Mmod \longrightarrow {\rm gr}_{p+1}^F \Mmod \otimes \Omega_X^1\big)$$
where $\theta_p$ are the $\shO_X$-module homomorphisms considered above. Equivalently, 
\begin{equation}\label{left_coh}
K_p (\Mmod) \simeq \cH^{-n} \gr^F_p {\rm DR}(\Mmod).
\end{equation}
\end{definition}

\noindent
{\bf Hodge modules and variations of Hodge structure.}
Let $X$ be a smooth complex algebraic variety of dimension $n$, and let 
$$\bV = (\mathcal{V}, F_\bullet, \V_{\QQ})$$
be a polarizable variation of $\QQ$-Hodge structure of weight $k$ on an open set $U \subset X$.
Here $\V_{\QQ}$ is a local system of $\QQ$-vector spaces on $U$, 
$\mathcal{V} = \V_{\QQ}\otimes_{\QQ} \shO_U$, and $F_p = F_p \mathcal{V}$ an increasing\footnote{This convention is adopted in order to match the standard Hodge module terminology; usually one would consider a decreasing filtration where $F^p$ corresponds to our $F_{-p}$.} filtration of 
subbundles of $\mathcal{V}$ satisfying Griffiths transversality with respect to the connection associated to $\mathcal{V}$.

In \cite{Saito-MHP}, Saito associates to this data a pure Hodge module of weight $n+ k$ on $X$, whose 
main constituents are:

\noindent
(1) ~ A filtered regular holonomic left $\Dmod_X$-module $(\Mmod, F_{\bullet})$,  with $F_{\bullet} \Mmod$
a good filtration by $\OX$-coherent subsheaves,  whose restriction to $U$ is $\cV$ together with its connection and Hodge filtration.

\noindent
(2) ~ A $\QQ$-perverse sheaf $P$ on $X$ such that $\DR_X(\Mmod) \simeq P \tensor_{\QQ} \CC$.

Moreover, one of Saito's fundamental results \cite[\S3.b]{Saito-MHM}, states that there is a unique such extension if we impose the condition that $M$ have strict support $X$, i.e. not have any sub or quotient objects with support strictly smaller than $X$. Its underlying perverse sheaf is $P = {\rm IC}_Z( \V_{\QQ}) =  {}^pj_{!*}\V_\Q$, the intersection complex of the given local system, and therefore one sometimes uses the notation 
\[M : = j_{!*}\bV.\]
These are the main objects we consider in this paper; we refer to them as \emph{the} Hodge module extension of the generically defined VHS. In Section \ref{normal_crossings} we will give a more concrete description of the Hodge filtration in the case when the complement of $U$ is a simple normal crossings divisor.

Assuming that the complement of $U$ in $X$ is a divisor $D$, we can also consider the filtered $\Dmod_X$-module 
$(\cV (*D), F_{\bullet})$, where $\cV(*D)$ is Deligne's meromorphic connection extending $\cV$. This underlies a natural mixed Hodge module extension of $\bV$ introduced in \cite{Saito-MHM}, denoted $j^*j^{-1} M$, and sometimes called the localization along $D$. More precisely
\[ j_*j^{-1}M =(\cV(*D), \fddot, j_*\V_\Q),\]
We will also have a more concrete description of this Hodge module in the case when $D$ is a simple normal crossings divisor.

\subsection{Weak positivity for logarithmic variations of Hodge structure}\label{log-WP}
In this section we recall background on meromorphic connections with log poles, and the 
 results of \cite{Zuo} and  \cite{Brunebarbe1}  that are used in the proof of our main theorem.

\noindent
{\bf Logarithmic connections.} 
We begin by reviewing the theory of logarithmic connections; see for instance \cite[\S2]{EV} and \cite[\S5.2.2]{HTT}.  
Let $X$ be a smooth complex variety of dimension $n$, and let $D=\sum D_i$ be a reduced simple normal crossings divisor. 
We will call such an $(X, D)$ a smooth log pair. Write $U= X \smallsetminus D$, and denote the inclusion by
\[j: U\hookrightarrow X.\]
Suppose $\cV$ is a holomorphic vector bundle of finite rank on $X$. An integrable logarithmic connection on $\cV$ along $D$ is a $\CC$-linear morphism
\[\nabla:\cV \rarr  \cV \otimes \Omega^1_X (\text{log}~D)\]
satisfying the Leibniz rule and $\nabla^2=0$. By analogy with the definition above, the logarithmic de Rham complex is defined as
\[{\rm DR}_D (\cV):=[\cV \otimes \Omega^\bullet_X(\text{log} D)][n].\]
For each $D_i$, composing the Poincar\'e residue and $\nabla$ induces the residue map 
$$\Gamma_i \in \text{End}(\cV|_{D_i}).$$

Now given a $\CC$-local system $\V$ on $U$, $\cV=\V\otimes\shO_U$ is a vector bundle with integrable connection, and this can be extended to an integrable logarithmic connection on $X$. Such an extension is unique if the eigenvalues of $\Gamma_i$ are required to be in the image of a fixed section $\tau$ of the projection $\CC\rarr\CC / \Z$; see \cite[Theorem 5.2.17]{HTT}.  If $\tau$ is chosen so that the real parts belong to the interval $[0, 1)$, the corresponding extension of $\cV$ is the Deligne canonical extension, denoted by $\cV^{\geq0}$.

If $E=\sum\alpha_i D_i$ is any Cartier divisor supported on $D$, $\cV (E)$ is also an integrable logarithmic connection, with residue $\Gamma^E_i$ given locally by 
\begin{equation}\label{res}
\Gamma^E_i=\Gamma_i-\alpha_i\cdot\text{Id}.
\end{equation}
See for instance \cite[Lemma 2.7]{EV}.

\noindent
{\bf Logarithmic variations of Hodge structure.}
Following \cite{Brunebarbe}, it will be convenient to consider the following notion which combines 
logarithmic connections and variations of Hodge structure:

\begin{definition}[Log VHS]
Let $(X, D)$ be a smooth log pair with $X$ projective, and set $U:=X \smallsetminus D$. A log variation of Hodge structure (log VHS) 
along $D$ consists of the following data: 

\noindent
(1) A logarithmic connection $(\cV, \nabla)$ along $D$.

\noindent
(2) An exhaustive increasing filtration $\fddot$ on $\cV$ by holomorphic subbundles (the Hodge filtration)\footnote{Again, to be consistent with conventions for Hodge modules, we use increasing filtrations.}, satisfying the Griffiths transversality condition
\[\nabla F_p\subseteq  F_{p+1} \otimes \Omega_X^1(\text{log}~D).\]  

\noindent
(3) A $\Q$-local system $\V^U_\Q$ on $U$, such that $(\cV|_{U}, \fddot|_U, \V^U_\Q)$ is a variation of Hodge structure on 
$U$.

\noindent 
A log VHS is polarizable if the variation of Hodge structure defined on $U$ is so.\footnote{Polarizability as defined here is slightly different from the notion considered in \cite{Brunebarbe}; however they are equivalent when the residues of $\nabla$ are nilpotent.}
\end{definition}

Filtered logarithmic de Rham complexes for log variations of Hodge structure are defined just as in the case of filtered 
$\Dmod$-modules. We can also consider the associated graded quotients of the filtered de Rham complex,
\[\gr^F_p {\rm DR}_D(\cV) = [\gr^F_{p+\bullet} \cV \otimes \Omega_X^\bullet(\text{log}~D)][n],\]
which are $\shO_X$-linear complexes of holomorphic vector bundles in degrees $-n, \ldots, 0$.
In this context it is known that the duals of the Kodaira-Spencer-type kernels
$$K_p (\cV) : = \cH^{-n} \text{gr}^F_p\text{DR}_D(\cV)  \simeq 
{\rm ker} \big( {\rm gr}_p^F \cV \overset{\theta_p}{\longrightarrow} {\rm gr}_{p+1}^F \cV \otimes \Omega_X^1 ({\rm log}~D)\big)$$ 
satisfy a positivity property; this extends the well-known Fujita-Kawamata semi-positivity theorem, and is due to 
Zuo {\cite[Theorem 1.2]{Zuo}; see also Brunebarbe \cite[Theorem 0.6]{Brunebarbe1} (or \cite[Theorem 3.6]{Brunebarbe}}) for 
an alternate proof. 

\begin{theorem} \label{sp}
Let $(X, D)$ be a smooth log pair with $X$ projective, and let $\cV$ be the bundle with logarithmic connection underlying a polarizable log VHS with nilpotent residues along $D$. If $\shF$ is a holomorphic subbundle of $K_p (\cV)$, then 
$\shF^{\vee}$ is nef. 
\end{theorem}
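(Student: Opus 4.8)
The plan is to equip $\shF^{\vee}$ with a Hermitian metric of semipositive curvature, induced from the Hodge metric of the underlying polarizable variation, and to show that its degeneration along $D$ --- unavoidable, but mild precisely because the residues are nilpotent --- still forces nefness. Over $U$ the bundle $\gr^F_p\cV|_U$ carries the Hodge metric $h_p$, with respect to which $\theta_p$ has a Hodge-metric adjoint $\theta_p^{\dagger}$ pointing in the opposite direction. The classical curvature formula for the Hodge metric (due to Griffiths) evaluates the Chern curvature of $(\gr^F_p\cV|_U, h_p)$, in a tangent direction $\xi$ and on a vector $v$, as $\lVert\theta_p(\xi)v\rVert^2 - \lVert\theta_{p-1}^{\dagger}(\bar\xi)v\rVert^2$. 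On $K_p(\cV)|_U = \ker\theta_p|_U$ the first term vanishes, so the Hodge metric there is Griffiths seminegative; passing to the holomorphic subbundle $\shF$ only decreases the curvature further, the second fundamental form entering with the favourable sign. Dualizing, $\shF^{\vee}|_U$ carries a smooth metric of Griffiths semipositive curvature, which on a projective manifold would already imply nefness.

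The substance of the proof is the extension of this picture across $D$. Nilpotency of the residues means $\cV|_U$ has unipotent local monodromies and $\cV=\cV^{\geq 0}$ is the Deligne canonical extension; the norm estimates of Schmid in one variable, and of Cattani--Kaplan--Schmid in several, then show that in local coordinates with $D=\{z_1\cdots z_k=0\}$ the Hodge norm of a flat multivalued frame grows like a product of fixed powers of $-\log\lvert z_i\rvert$. Hence $h_p$ extends to a singular Hermitian metric on $\gr^F_p\cV$ over $X$ whose local weights differ from smooth functions by quantities of size $O\bigl(\sum\log(-\log\lvert z_i\rvert)\bigr)$; in particular $\shF^{\vee}$ inherits a singular metric whose curvature current is semipositive and has vanishing Lelong numbers everywhere, in particular along $D$.

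It then suffices to test nefness on curves: $\shF^{\vee}$ is nef if and only if for every smooth projective curve $C$ and every $\gamma\colon C\to X$ every sub-line-bundle $L\subseteq\gamma^{\ast}\shF$ has $\deg L\le 0$. When $\gamma(C)\not\subseteq D$ one pulls back the construction above: $\deg L=\int_C c_1(L,\gamma^{\ast}h_p|_L)$, the integrand is $\le 0$ on $\gamma^{-1}(U)$ by the first step, and the merely logarithmic degeneration at the finitely many points of $\gamma^{-1}(D)$ contributes no mass to $c_1(L,\gamma^{\ast}h_p|_L)$. When $\gamma(C)\subseteq D$ one argues by induction on $\dim X$, replacing $\cV$ by the limiting variation of Hodge structure associated with the nilpotent orbit along a component of $D$ --- again polarizable with unipotent monodromy --- to which $\shF|_D$ restricts compatibly as (a subbundle of) a Kodaira--Spencer kernel.

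The main obstacle is the second step: proving that the Hodge metric degenerates along $D$ without concentrating negative curvature there, and identifying the Deligne canonical extension with the extension controlled by the asymptotics. This is the genuinely deep input --- Schmid's nilpotent orbit and $\mathrm{SL}_2$-orbit theorems and their several-variable refinements by Cattani--Kaplan--Schmid --- whereas the curvature computation and the curve test are then essentially formal. This is in substance the argument of Zuo, streamlined by Brunebarbe.
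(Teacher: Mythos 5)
The paper offers no proof of \theoremref{sp} to compare against: it is imported as a black box from Zuo (Theorem 1.2 of \cite{Zuo}) and Brunebarbe (Theorem 0.6 of \cite{Brunebarbe1}), and only its consequence \corollaryref{WP_log} is argued in the text. What you have written is therefore a reconstruction of those external proofs, and in outline it is the right one. Your sign conventions check out: with $\theta_p\colon\gr^F_p\cV\to\gr^F_{p+1}\cV\otimes\Omega^1_X(\log D)$, the Griffiths curvature of the Hodge metric on $\gr^F_p\cV$ is the square norm of the outgoing Higgs field minus that of the adjoint of the incoming one, so it is seminegative on $\ker\theta_p$ (this also correctly recovers Fujita--Kawamata semipositivity for the lowest piece, which has no incoming term), and a holomorphic subbundle only becomes more negative. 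The genuinely hard input is, as you say, the Schmid and Cattani--Kaplan--Schmid norm estimates, which is exactly where nilpotency of the residues (equivalently, unipotency of the local monodromy and the identification $\cV=\cV^{\geq 0}$) is used, and which guarantee that the singular metric induced on $\shF^{\vee}$ has semipositive curvature current with zero Lelong numbers.

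The one place your sketch is materially thinner than the cited proofs is the case of curves contained in $D$. The proposed induction on $\dim X$ --- restricting to a boundary component and replacing $\bV$ by ``the limiting variation'' --- is not routine: the restriction of $(\cV,\fddot)$ to $D_i$ is not itself a polarized variation of Hodge structure (only the graded pieces of the monodromy weight filtration of the residue carry polarized structures, via the several-variable $\mathrm{SL}_2$-orbit theorem), and one would still have to show that $\shF|_{D_i}$ lands in a Kodaira--Spencer kernel of that object. Neither Zuo nor Brunebarbe argues this way. The clean way to close this case is already contained in your second step: a semipositive singular metric with everywhere vanishing Lelong numbers on the tautological quotient line bundle of $\PP(\shF^{\vee})$ implies nefness by Demailly's regularization theorem, uniformly for all curves, including those inside $D$; alternatively Zuo bounds degrees of quotients directly using the logarithmic growth of the metric. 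Replace the induction by one of these and your account is a faithful, correctly signed summary of the proof the paper is citing.
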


For the proof of the next statement, it will be useful to note the following:  if
\[g: (Y, E)\rarr (X, D)\]
is a morphism of smooth projective log pairs, where  $E=(f^{*}D)_{\text{red}}$, then a simple local calculation shows that 

\noindent
(1) If $\cV$ underlies a (polarizable) log VHS along $D$, then so does $f^*\cV$, along $E$;

\noindent
(2) If furthermore the residues of $\cV$ along $D$ are nilpotent, then so are those of $f^*\cV$ along $E$.

As a consequence of \theoremref{sp} we have the following statement, which in the geometric case is essentially \cite[Lemma 4.4(v)]{VZ}.

\begin{corollary}\label{WP_log}
If $\cV$ underlies a polarizable log VHS with nilpotent residues along $D$, then $K_p (\cV)^\vee$ is weakly positive.
\end{corollary}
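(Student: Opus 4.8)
The plan is to deduce weak positivity of $K_p(\cV)^\vee$ from the nefness statement of \theoremref{sp}. The difficulty is that \theoremref{sp} speaks about holomorphic \emph{subbundles} of $K_p(\cV)$ (which is itself a subbundle of a bundle and hence locally free), whereas weak positivity is a property of (torsion-free) coherent sheaves that need not be locally free, and the two notions — ``$\shF^\vee$ nef for all subbundles $\shF$'' versus ``$K_p(\cV)^\vee$ weakly positive'' — have to be reconciled. First I would recall that, since $K_p(\cV)$ is a subsheaf of the locally free sheaf $\gr^F_p\cV$, it is torsion-free, so $K_p(\cV)^\vee$ makes sense as a reflexive sheaf and weak positivity is the right notion to ask for. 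The key observation is that for vector bundles the two conditions coincide: a vector bundle $\shE$ on a smooth projective variety is nef if and only if it is weakly positive (over all of $X$), and if and only if every quotient bundle of $\shE$ is nef, equivalently every subbundle $\shF \subseteq \shE^\vee$ has $\shF^\vee$ nef. This is classical (see Viehweg, or Lazarsfeld's book); I would simply cite it.

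The second and main step is to reduce the reflexive sheaf $K_p(\cV)^\vee$ to a genuine vector bundle situation where \theoremref{sp} applies directly. Here is where the hypothesis that we only control \emph{subbundles} of $K_p(\cV)$ — not the possibly-non-locally-free sheaf $K_p(\cV)$ itself — is exactly what forces a detour. Over the open locus $X^\circ \subseteq X$ where $K_p(\cV)$ is locally free (its complement has codimension $\ge 2$ since $K_p(\cV)$, being a kernel of a map of bundles, is reflexive, hence locally free in codimension $1$), $K_p(\cV)|_{X^\circ}$ is a subbundle of $\gr^F_p\cV|_{X^\circ}$, so by \theoremref{sp} the bundle $(K_p(\cV)|_{X^\circ})^\vee$ is nef — but nefness on an open subset is not enough. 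To globalize, I would argue as follows. Take a log resolution, or more simply blow up the locus where $K_p(\cV)$ fails to be a subbundle in an appropriate sense; the cleanest route is: since $K_p(\cV)$ is torsion-free, there is a birational morphism $\mu\colon \widetilde X \to X$, isomorphism over $X^\circ$, such that $\mu^* K_p(\cV)/(\text{torsion})$ has a locally free quotient $\widetilde K$ agreeing with $K_p(\cV)$ over $X^\circ$, and one can arrange $(\widetilde X, \widetilde D)$ with $\widetilde D = (\mu^* D)_{\mathrm{red}}$ a smooth log pair. By the functoriality remark preceding \corollaryref{WP_log}, $\mu^*\cV$ underlies a polarizable log VHS with nilpotent residues along $\widetilde D$, and one checks $\widetilde K$ is a subbundle of $K_p(\mu^*\cV)$ (this needs the compatibility of forming Kodaira--Spencer kernels with pullback, a local computation). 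Then \theoremref{sp} gives that $\widetilde K^\vee$ is nef, hence weakly positive on $\widetilde X$; pushing forward via \lemmaref{wplemma}(1),(2) and using that $\mu$ is an isomorphism over $X^\circ$ yields weak positivity of $K_p(\cV)^\vee$ over $X^\circ$, which — $U$ being allowed to be any dense open — is exactly what the definition of weak positivity requires.

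I expect the main obstacle to be the bookkeeping in the globalization step: constructing the modification $\mu$ that simultaneously (a) turns $\mu^*K_p(\cV)$ into something with a locally free quotient realized as a subbundle of $K_p(\mu^*\cV)$, and (b) keeps the log pair structure smooth so that \theoremref{sp} still applies downstairs on $\widetilde X$. The cleanest implementation is probably to avoid modifying $\cV$ at all: work directly on $X$, note that $K_p(\cV)$ is a subsheaf of the bundle $\gr^F_p\cV$, restrict to the big open set $X^\circ$ where it is a subbundle, apply \theoremref{sp} there, and then invoke the fact that a torsion-free sheaf whose restriction to a big open set is the restriction of a nef bundle (or admits a nef ``saturation'') is weakly positive — but since the definition of weak positivity only demands generation of global sections of twists over \emph{some} open set, it suffices to produce the nef bundle structure over $X^\circ$ and quote \lemmaref{wplemma}(1) applied to the surjection from a suitable ample-twist onto $\widehat S^{\alpha\beta}$ of the sheaf. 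In any case, the heart of the argument is \theoremref{sp}, and everything else is the standard dictionary between nef bundles and weak positivity together with Viehweg's \lemmaref{wplemma}.
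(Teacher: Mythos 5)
Your primary route---pass to a birational modification $\mu\colon \widetilde X \to X$ so that the image of $\mu^* K_p(\cV)$ becomes a genuine subbundle, observe that $\mu^*\cV$ still underlies a polarizable log VHS with nilpotent residues along $(\mu^* D)_{\mathrm{red}}$, check that this subbundle lies inside the kernel of the Kodaira--Spencer map upstairs, apply \theoremref{sp} there, and descend via \lemmaref{wplemma}(1) and (2)---is exactly the paper's proof. The one step you defer (``$\widetilde K$ is a subbundle of $K_p(\mu^*\cV)$'') is not really a local pullback computation but the soft observation that $\theta'_p(\widetilde K)$ is a generically zero subsheaf of a vector bundle, hence zero; with that filled in, your argument matches the paper's.
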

\begin{proof}
Let us denote for simplicity
$$\shE_p : = {\rm gr}_p^F \cV.$$
If $K_p (\cV)$ is already a subbundle of $\shE_p$, then we are done by \theoremref{sp}, as nef vector bundles 
are weakly positive. This need not be the case in general; however, a standard resolution of singularities argument 
applies to provide a birational morphism $\rho: X' \rightarrow X$ with $X'$ smooth projective, such that 
$\rho^* K_p (\cV)$ has a morphism to a subbundle  $\shF$ of $\rho^*\shE_p$, which is generically an isomorphism.

On the other hand, we have a commutative diagram 
$$
\begin{tikzcd}
\rho^*\shE_p \drar{\theta'_p} \rar{\rho^*\theta_p} &\rho^*\shE_{p+1} \otimes \rho^* \Omega_X^1(\text{log}~D) \dar{\psi} \\
   & \rho^*\shE_{p+1} \otimes \Omega_{X'}^1(\text{log}~E)
\end{tikzcd}
$$
where $E=(\rho^*D)_{\text{red}}$ and $\psi$ is induced by the natural map
\[\rho^* \Omega_X^1(\text{log}~D) \rarr \Omega_{X'}^1(\text{log}~E).\] 
Using the remark before the statement of the Corollary, $\theta^\prime_p$ again corresponds to a log VHS (induced by the same generic variation of Hodge structure), with a logarithmic connection along $E$. Since $\rho$ is birational, $\theta'_p(\shF)$ is generically 0, and so identically $0$ since it embeds in a vector bundle.  Hence $\shF$ is contained in $\text{Ker}(\theta'_p)$, and so $\shF^\vee$ is nef by \theoremref{sp}. By \lemmaref{wplemma}(1) we obtain that 
$\rho^* K_p (\cV)^\vee$ is weakly positive as well. Finally, this implies that $ K_p (\cV)^\vee$ itself is weakly positive, 
using \lemmaref{wplemma}(2).
\end{proof}

\subsection{Normal crossings case}\label{normal_crossings}
In this section we establish the main result for pure Hodge module extensions of variations of Hodge structure defined on the complement of a simple normal crossings divisor, by reducing to the result for logarithmic connections in the previous section.

\noindent
{\bf Hodge modules associated to variations of Hodge structure.}
Let $(X, D)$ be a smooth  log pair,  with $X$ projective and $\dim X = n$.
Denote $U=X \smallsetminus  D$ and $j: U\hookrightarrow X$. We consider a polarizable variation of Hodge structure 
\[\bV=(\cV, \fddot, \V_\Q)\]
over $U$, with quasi-unipotent local monodromies along the components $D_i$ of $D$. In particular the eigenvalues of all residues are rational numbers. For $\alpha \in \ZZ$,  we 
denote by $\cV^{\geq\alpha}$ (resp. $\cV^{>\alpha}$) the Deligne extension with eigenvalues of residues along the  $D_i$ in $[\alpha, \alpha+1)$ (resp. $(\alpha, \alpha+1]$). Recall that $\cV^{\geq\alpha}$ is filtered by
\begin{equation}\label{filtr}
F_p\cV^{\geq\alpha}=\cV^{\geq\alpha}\cap j_*F_p\cV,
\end{equation}
while the filtration on $\cV^{>\alpha}$ is defined similarly. The terms in the filtration are 
locally free by Schmid's nilpotent orbit theorem \cite{Schmid} (see also e.g. \cite[2.5(iii)]{Kollar} for the quasi-unipotent case); 
we have that $(\cV^{\geq\alpha(>\alpha)}, \fddot, \V_\Q)$ is a polarizable log VHS.

Following Section \ref{background}, let now $M$ be the pure Hodge module with strict support $X$ uniquely extending 
$\bV$. It is proved in \cite[\S 3.b]{Saito-MHM}  that
\[M=(\Dmod_X\cV^{>-1}, \fddot, j_{!*}\V_\Q),\] where
\[F_p\Dmod_X\cV^{>-1}=\sum_i F_i\Dmod_X \cdot F_{p-i}\cV^{>-1}.\]

We also consider the natural mixed Hodge module extension of $\bV$, namely the localization
\[j_*j^{-1}M =(\cV(*D), \fddot, j_*\V_\Q).\]
See e.g. \cite[\S4.2]{Saito-MHM}. We recall that $\cV(*D)$ is Deligne's meromorphic connection extending $\cV$, with lattice 
$\cV^{>\alpha}$ (or $\cV^{\geq\alpha}$) for any $\alpha\in\QQ$, namely
\[\cV(*D)=\cV^{>\alpha}\otimes \shO_X (*D),\]
with filtration given by
\[F_p\cV(*D)=\sum_i F_i\Dmod_X \cdot F_{p-i}\cV^{\geq-1}.\]

In \cite[Proposition 3.11(ii)]{Saito-MHM}, Saito constructed a filtered quasi-isomorphism that will be used in what follows, namely
\begin{equation}\label{qiso1}
\big([\cV^{\geq0}\otimes \Omega^\bullet_X(\text{log} ~D)][n], \fddot\big) \simeq \big(\text{DR}( j_*j^{-1}M ), \fddot \big).
\end{equation}
Here the notation on the right hand side refers to the filtered de Rham complex of the underlying filtered 
$\Dmod$-module.


\noindent
{\bf Unipotent reduction and proof in the normal crossings case.}
In the setting of the previous section, a standard argument using Kawamata's covering construction \cite{Kawamata} provides a finite flat morphism of smooth projective log pairs
\[f: (Y, E)\rarr (X, D)\]
with $(f^*D)_\text{red}=E$, such that the pull-back $\V_1: = f_1^* \V_\Q$ has unipotent local monodromies along all irreducible components $E_i$ of $E$, where $f_1=f|_{f^{-1}(U)}$. \par

To perform the reduction to the case of $f^* \bV$ on $(Y, E)$, let us first recall that one calls a 
lattice for $\cV (*D)$ any locally free sheaf $\cL$ on $X$ satisfying 
$$\cV(*D) \simeq \cL \otimes \shO_X(*D),$$
and preserved by the action of $f\cdot \nabla$, where $\nabla$ is the meromorphic connection on $\cV(*D)$
 and $f$ is a local equation of  $D$. We also consider the same notions for $E$.

For the argument, first, it is not hard to see that  $f^* \cV(*D)$ is a regular meromorphic connection on $Y$ extending $\V_1$. This  implies 
\[ f^* \cV(*D) \simeq \cV_1(*E),\]
e.g. by Deligne's Riemann-Hilbert correspondence for meromorphic connections with regular singularities 
(see \cite[Theorem 5.2.20]{HTT}). Moreover, $f^*\cV^{\geq0}$ is a lattice of $\cV_1(*E)$. 
A simple local calculation shows that the eigenvalues of the residue of the connection along each component of $E$ are nonnegative integers. On the other hand,
\[ \cV_1(*E)=\lim_{\longrightarrow}\cV_1^{\geq 0}(-kE)\]
over $k \in \ZZ$. Hence 
\[f^*\cV^{\geq 0}\subseteq \cV_1^{\geq 0}(- kE)=\cV_1^{\geq k},\] for some integer $k$.
We claim that $k \ge 0$, so that in particular 
\begin{equation}\label{placement}
f^*\cV^{\geq 0}\subseteq \cV_1^{\geq 0}.
\end{equation}
This is a special case of the following general statement:

\begin{proposition} 
Let $Y$ be a smooth complex variety and $E = \sum E_i$ a simple normal crossings divisor on $Y$. Suppose that $\cW$ is a lattice for a regular meromorphic connection with poles along $E$, and with quasi-unipotent local monodromies along all 
$E_i$. Denoting by $\Gamma_i$ the residue of $\cW$ along $E_i$ with respect to the connection, if \[t : = \textup{min}~\{ \textup{eigenvalues of } \Gamma_i \textup{ for all } i \},\] then 
\[\cW\subseteq \cW^{\geq t},\] where $\cW^{\geq t}$ is the Deligne extension with eigenvalues of all residues in $[t, t +1)$.
\end{proposition}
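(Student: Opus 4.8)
The plan is to reduce the statement to a one-variable problem about lattices of regular singular connections, and then to produce the inclusion by a chain of elementary modifications that only ever enlarge the lattice.

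First I would observe that $\cW\subseteq\cW^{\geq t}$ is an inclusion of subsheaves of the meromorphic connection $\cW\otimes\cO_Y(*E)$, that it may be checked locally, and that it suffices to check it after deleting from $Y$ a closed subset of codimension $\geq 2$: since $Y$ is smooth, $\cW^{\geq t}$ is reflexive and $\cW\otimes\cO_Y(*E)$ is torsion-free, so any local section of $\cW$ that lies in $\cW^{\geq t}$ in codimension one already lies in $\cW^{\geq t}$. Deleting $\bigcup_{i\neq j}(E_i\cap E_j)$ together with a further proper closed subset of each $E_i$, I reduce to the local ring $\cO_{Y,\eta_i}$ at the generic point $\eta_i$ of a single component $E_i$ — a discrete valuation ring with uniformizer a local equation $x$ of $E_i$ — carrying a regular meromorphic connection with quasi-unipotent monodromy, hence with rational residue eigenvalues. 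Here the residue eigenvalues of $\cW$ are still $\geq t$ (they are locally constant along $E_i$, by quasi-unipotence), and $(\cW^{\geq t})_{\eta_i}$ is a lattice whose residue eigenvalues lie in $[t,t+1)$; by uniqueness of the Deligne extension \cite[Theorem 5.2.17]{HTT} it is \emph{the} one-variable Deligne lattice with that property. So it suffices to treat the one-variable case.

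In the one-variable setting I would argue by induction. If every eigenvalue of the residue $\Gamma$ of $\cW$ lies in $[t,t+1)$, then $\cW$ equals the Deligne lattice $\cW^{\geq t}$ and we are done. Otherwise let $\lambda\geq t+1$ be the largest eigenvalue of $\Gamma$, let $G\subseteq\cW/x\cW$ be its generalized $\lambda$-eigenspace (a $\Gamma$-stable subspace), let $\widetilde G\subseteq\cW$ be the preimage of $G$ under $\cW\to\cW/x\cW$, and set $\cW':=x^{-1}\widetilde G$. A short computation — using that $\widetilde G$ is stable under $x\nabla$ because $G$ is $\Gamma$-stable — shows that $\cW'$ is a lattice, that $\cW\subseteq\cW'$, and that the residue $\Gamma'$ of $\cW'$ has the same eigenvalues as $\Gamma$ except that each occurrence of $\lambda$ is replaced by $\lambda-1$. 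Thus all eigenvalues of $\Gamma'$ are still $\geq t$, while the largest eigenvalue has strictly decreased; since these eigenvalues lie in a fixed discrete subset of $\QQ$, after finitely many steps I reach a lattice $\cW^{(N)}\supseteq\cW$ all of whose residue eigenvalues lie in $[t,t+1)$. By uniqueness $\cW^{(N)}=\cW^{\geq t}$, whence $\cW\subseteq\cW^{\geq t}$.

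The one-variable input is classical (see \cite[\S5.2]{HTT}, or \cite{EV}, \cite{Kollar}), so I expect the delicate points to lie in the first step — justifying that the codimension-$2$ reduction is legitimate, and that at $\eta_i$ the localization of $\cW^{\geq t}$ really is a one-variable Deligne lattice with residue eigenvalues constant along $E_i$ — and in checking that the hypothesis ``all residue eigenvalues $\geq t$'' survives each modification, which it does since each step only lowers an eigenvalue that is $\geq t+1$ by exactly $1$.
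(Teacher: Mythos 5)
Your argument is correct, but it takes a genuinely different route from the paper's. You first reduce to codimension one (legitimate: $\cW^{\geq t}$ is locally free, hence normal, and the ambient meromorphic connection is torsion-free, so an inclusion of subsheaves valid outside a set of codimension $\geq 2$ holds everywhere) and then climb from $\cW$ up to the Deligne lattice by an explicit increasing chain of elementary modifications, each step replacing the top residue eigenvalue $\lambda\geq t+1$ by $\lambda-1$; termination holds because the eigenvalues stay in a fixed discrete set and remain $\geq t$, and the endpoint is identified with $\cW^{\geq t}$ by uniqueness of the Deligne extension. The paper instead argues globally on $Y$: it takes the smallest integer $k$ with $\cW\subseteq\cW^{\geq -k}$, uses the Artin--Rees lemma to find for each $i$ the exact order of contact $\ell$ with the twists $\cW^{\geq -k}(-\ell E_i)$, and reads off from the residue action on the nonzero quotient $\cW/\cW\cap\cW^{\geq -k}\bigl(-(\ell+1)E_i\bigr)$ that $-k+\ell\geq t$; if $-k<t$ this forces $\ell>0$ for every $i$, hence $\cW\subseteq\bigcap_i\cW^{\geq -k}(-E_i)=\cW^{\geq -k+1}$, contradicting minimality. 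Your approach is more constructive --- it exhibits the actual chain of lattices joining $\cW$ to $\cW^{\geq t}$ and makes transparent why $t$ is exactly the right threshold --- but it shifts the burden onto the reduction step: one must check that the localization of $\cW^{\geq t}$ at the generic point of $E_i$ is still the unique stable lattice with residue eigenvalues in $[t,t+1)$, and that generalized eigenspaces and the shearing computation make sense over the (non-algebraically-closed) residue field; these points are fine here because quasi-unipotence makes the eigenvalues rational and locally constant along $E_i$, but they deserve the care you flag. The paper's proof avoids all of this by staying global and handling all components simultaneously via the identity $\cW^{\geq -k}(-E)=\cW^{\geq -k+1}$; both proofs ultimately rest on the uniqueness of the Deligne extension together with the shift of residues under twisting by components of $E$.
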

\begin{proof}
Suppose $k$ is the smallest integer such that $\cW\subseteq \cW^{\geq -k}$. By the Artin-Rees Lemma (see for instance \cite[Corollary A.36]{Kas}), there is an integer $\ell \geq 0$ such that 
\[\cW\subseteq \cW^{\geq -k}(-\ell E_i) \,\,\,\, {\rm and} \,\,\,\, \cW\not\subseteq \cW^{\geq -k} \big(-(\ell +1)E_i\big). \] 
Hence 
$$\cW/\cW\cap\cW^{\geq -k} \big(-(\ell +1)E_i \big) \neq 0,$$ 
and given that $\cW (- E_i) \subseteq \cW^{\geq -k} \big(-(\ell +1)E_i\big)$ we have a short exact sequence
\[0\rightarrow \dfrac{\cW\cap\cW^{\geq -k} \big(-(\ell+1)E_i \big)}{\cW(-E_i)}\rightarrow \cW|_{E_i}\rightarrow \dfrac{\cW}{\cW\cap\cW^{\geq -k}\big(-(\ell+1)E_i \big)}\rightarrow 0. \] 
Because of our choice of $k$, and the identification
$$\cW^{\geq -k}(-\ell E_i) \simeq \cW^{\geq 0} ( kE -\ell E_i), $$
the last term in the exact sequence has an induced action of $\Gamma_i + (k - \ell)\cdot {\rm Id}$,  with non-negative eigenvalues; see also (\ref{res}). (Note that since the first term in the sequence comes from an intersection of lattices, it is preserved by the residue action on $\cW|_{E_i}$.) Since this action is induced by the action of $\Gamma_i$ on $E_i$,  by definition it follows that 
$$ - k + \ell \ge t.$$
We need to show that $-k \ge t$. If on the contrary we assume that $k + t > 0$, it follows that $\ell > 0$ as well. But the exact same argument 
can be run for every $E_i$, and so it follows that 
$$\cW \subseteq \bigcap_i \cW^{\ge - k} (- E_i) = \cW^{\ge - k + 1},$$
which contradicts the minimality of $k$.
\end{proof}

\begin{remark}
Although not needed for the statement above, it is worth noting that the argument above can be continued inductively 
in order to reconstruct the entire minimal polynomial of $\Gamma_i$ acting on $\cW|_{E_i}$. Roughly speaking, denoting 
\[\cW_1:=\cW\cap\cW^{\geq -k} \big(-(\ell +1)E_i\big),\]
one can repeat an appropriate procedure for $\cW_1$ instead of $\cW$. The process will eventually stop  for dimension reasons.
\end{remark}

To deal with the pull-back of the Hodge filtration, we use the following simple lemma (see also \cite[Lemma 5.1]{FF}):

\begin{lemma}\label{incl}
Let $\shE$ be a locally free sheaf on $X$, and let $\shF$ and $\shG$ be two subsheaves of $\shE$ such that $\shG$ is locally free, and of maximal rank at each point as a subsheaf of $\shE$ (i.e. $\shG$ is a subbundle). If $\shF|_U=\shG|_U$ for some nonempty Zariski open subset $U\subseteq X$, then $\shF\subseteq \shG$.
\end{lemma}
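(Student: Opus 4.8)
The plan is to reduce the desired inclusion of subsheaves to the vanishing of a single morphism of sheaves, and then to conclude using torsion-freeness together with the density of $U$.

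First I would note that the conclusion $\shF\subseteq\shG$ as subsheaves of $\shE$ can be checked locally on $X$, so it suffices to reason near an arbitrary point. The key structural input is that, since $\shG$ has maximal rank at each point as a subsheaf of $\shE$, the quotient $\shf{Q}:=\shE/\shG$ is again locally free: the hypothesis says precisely that $\shG\tensor k(x)\to\shE\tensor k(x)$ is injective for every $x\in X$, which forces $\coker(\shG\to\shE)$ to have locally constant fiber dimension and hence, being coherent on the smooth variety $X$, to be locally free. Consequently the short exact sequence $0\to\shG\to\shE\xrightarrow{\pi}\shf{Q}\to0$ splits locally, and in particular $\shG=\ker\pi$ globally.

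Next I would form the composite $\shF\hookrightarrow\shE\xrightarrow{\pi}\shf{Q}$ and its image sheaf $\pi(\shF)\subseteq\shf{Q}$; proving $\shF\subseteq\shG=\ker\pi$ is equivalent to proving $\pi(\shF)=0$. By hypothesis $\shF|_U=\shG|_U$ inside $\shE|_U$, and since formation of image sheaves commutes with restriction to open sets, $\pi(\shF)|_U=\pi|_U(\shG|_U)=\pi|_U(\ker\pi|_U)=0$. Thus $\pi(\shF)$ is a subsheaf of the locally free — hence torsion-free — sheaf $\shf{Q}$ whose restriction to the dense open set $U$ vanishes. Such a subsheaf must be zero: for any connected open $V\subseteq X$ the restriction $\shO_X(V)\to\shO_X(V\cap U)$ is injective because $X$ is integral and $V\cap U$ is dense in $V$, so $\shf{Q}(V)\to\shf{Q}(V\cap U)$ is injective as well, which forces every local section of $\pi(\shF)$ to be $0$. (If $X$ is not connected one runs this component by component.) Hence $\pi(\shF)=0$ and $\shF\subseteq\shG$.

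I do not expect a genuine obstacle here; the statement is a formal consequence of torsion-freeness. The one point that must not be glossed over is that the hypothesis "$\shG$ is of maximal rank at each point" is used essentially — and only — to guarantee that $\shE/\shG$ is locally free, hence torsion-free; without it the conclusion fails, as one sees by taking $\shE$ a line bundle, $x\in U$, and $\shG=\mathfrak m_x\shE$ together with $\shF=\shE$.
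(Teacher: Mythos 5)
Your argument is correct and complete; the paper itself states this lemma without proof (deferring to \cite[Lemma 5.1]{FF}), and your reduction --- pass to the quotient $\shf{Q}=\shE/\shG$, which is locally free precisely because $\shG$ is a subbundle, and then use that a subsheaf of a torsion-free sheaf on an integral variety vanishing on a dense open set must vanish --- is exactly the standard argument one would supply here. The only slip is in your closing counterexample: as written, with $x\in U$, the hypothesis $\shF|_U=\shG|_U$ already fails (the two sheaves differ at $x$), and in dimension $>1$ the sheaf $\mathfrak{m}_x\shE$ is not locally free; what you want is $X$ a curve, $U=X\smallsetminus\{x\}$, $\shG=\shE(-x)$ and $\shF=\shE$, so that $\shG$ is locally free but not a subbundle, $\shF|_U=\shG|_U$, and yet $\shF\not\subseteq\shG$.
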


Putting together (\ref{placement}) and Lemma \ref{incl}, we conclude:

\begin{corollary}\label{hodf}
\[f^*F_p\cV^{\geq 0}\subseteq F_p\cV_1^{\geq 0}.\]
\end{corollary}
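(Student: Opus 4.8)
The plan is to combine the containment of lattices already established for the Deligne extensions with the explicit description of the Hodge filtration as an intersection. Recall from \eqref{filtr} that
\[
F_p \cV^{\geq 0} = \cV^{\geq 0} \cap j_* F_p \cV,
\qquad
F_p \cV_1^{\geq 0} = \cV_1^{\geq 0} \cap (j_1)_* F_p \cV_1,
\]
where $j_1 : f^{-1}(U) \hookrightarrow Y$. First I would pull back the first identity along $f$: since $f^*$ is exact on locally free sheaves and $F_p \cV^{\geq 0}$, $\cV^{\geq 0}$ are locally free (by Schmid's theorem, as noted after \eqref{filtr}), we get $f^* F_p \cV^{\geq 0} \subseteq f^* \cV^{\geq 0}$. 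Combining this with \eqref{placement}, $f^* \cV^{\geq 0} \subseteq \cV_1^{\geq 0}$, gives $f^* F_p \cV^{\geq 0} \subseteq \cV_1^{\geq 0}$, viewing both as subsheaves of the locally free sheaf $\cV_1^{\geq 0}$ (or, equally well, of $\cV_1(*E)$).

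The remaining point is to also land inside $(j_1)_* F_p \cV_1$. Over $f^{-1}(U)$, the restriction of $f^* F_p \cV^{\geq 0}$ is $f_1^* F_p \cV = F_p \cV_1$, because pulling back a variation of Hodge structure pulls back its Hodge filtration, and $f_1$ is the restriction of $f$ to $f^{-1}(U)$. Hence $f^* F_p \cV^{\geq 0}$ and $F_p \cV_1^{\geq 0}$ agree generically as subsheaves of $\cV_1^{\geq 0}$. Now I would invoke \lemmaref{incl}: take $\shE = \cV_1^{\geq 0}$, let $\shG = F_p \cV_1^{\geq 0}$, which is a subbundle of $\shE$ since all the $F_p \cV_1^{\geq 0}$ are locally free and the quotients $\gr^F_p \cV_1^{\geq 0}$ are again locally free (Schmid's nilpotent orbit theorem, applied now on $(Y,E)$ where the monodromy is unipotent), and let $\shF = f^* F_p \cV^{\geq 0}$. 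Since $\shF|_{f^{-1}(U)} = \shG|_{f^{-1}(U)} = F_p \cV_1$, the lemma yields $\shF \subseteq \shG$, which is exactly the desired inclusion $f^* F_p \cV^{\geq 0} \subseteq F_p \cV_1^{\geq 0}$.

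The only mild subtlety — and the step I would be most careful about — is verifying that $F_p \cV_1^{\geq 0}$ is genuinely a subbundle of $\cV_1^{\geq 0}$, i.e. of maximal rank at every point, as required by \lemmaref{incl}; this is where unipotence of the monodromy of $f_1^* \V_\Q$ along $E$ (arranged by Kawamata's covering construction) is used, via the local freeness of all the graded pieces in Schmid's theorem. Everything else is formal: exactness of $f^*$ on bundles, compatibility of Hodge filtrations with pullback of VHS, and the already-proved containment \eqref{placement}. I would therefore present the proof as: pull back \eqref{filtr}, intersect with \eqref{placement}, check the generic agreement over $f^{-1}(U)$, and conclude by \lemmaref{incl}.
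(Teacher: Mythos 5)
Your proof is correct and is essentially the argument the paper intends: the paper's own proof is the one-line remark that the corollary follows by ``putting together (\ref{placement}) and Lemma \ref{incl},'' and you have simply made explicit the choices $\shE = \cV_1^{\geq 0}$, $\shG = F_p\cV_1^{\geq 0}$, $\shF = f^*F_p\cV^{\geq 0}$, the generic agreement over $f^{-1}(U)$, and the local freeness of the filtration steps guaranteed by Schmid's theorem. No changes needed.
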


\noindent
Recall now that we set
\[K_p (\cV^{\geq 0}) :=\shH^{-n} \text{gr}^F_p\text{DR}_D(\cV^{\geq0}) 
\simeq 
{\rm ker} \big( {\rm gr}_p^F \cV^{\geq 0} \overset{\theta_p}{\longrightarrow} {\rm gr}_{p+1}^F \cV^{\geq 0} \otimes \Omega_X^1 ({\rm log}~D)\big). \]

\begin{theorem}\label{WP_Deligne}
$K_p (\cV^{\geq 0})^{\vee}$ is weakly positive for any $p$.
\end{theorem}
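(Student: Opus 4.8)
The plan is to reduce \theoremref{WP_Deligne} to \corollaryref{WP_log} by passing to the unipotent cover $f\colon (Y,E)\to (X,D)$ constructed above. On $Y$, the pulled-back variation $\bV_1=f^*\bV$ has unipotent monodromy along $E$, hence nilpotent residues, so $\cV_1^{\geq 0}$ underlies a polarizable log VHS with nilpotent residues, and \corollaryref{WP_log} gives that $K_p(\cV_1^{\geq 0})^\vee$ is weakly positive. The strategy is then to compare $f^* K_p(\cV^{\geq 0})$ with $K_p(\cV_1^{\geq 0})$ and use \lemmaref{wplemma}(1) and (3) to transport weak positivity back down to $X$.

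First I would use \corollaryref{hodf}, namely $f^*F_p\cV^{\geq 0}\subseteq F_p\cV_1^{\geq 0}$, to produce a natural $\shO_Y$-linear morphism of associated graded sheaves $f^*\gr_p^F\cV^{\geq 0}\to \gr_p^F\cV_1^{\geq 0}$, which over $f^{-1}(U)$ is an isomorphism (both restrict to $\gr_p^F\cV$). Combined with the pullback of the Kodaira–Spencer map and the natural map $f^*\Omega_X^1(\log D)\to \Omega_Y^1(\log E)$, one obtains a commutative square relating $f^*\theta_p$ to the Kodaira–Spencer map $\theta_p^{(1)}$ of $\cV_1^{\geq 0}$. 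Chasing this square, any local section of $f^*K_p(\cV^{\geq 0})$ maps into $\ker\theta_p^{(1)}=K_p(\cV_1^{\geq 0})$: its image is killed by $\theta_p^{(1)}$ because it is killed generically (where the vertical maps are isomorphisms) and $\gr_{p+1}^F\cV_1^{\geq 0}\otimes\Omega_Y^1(\log E)$ is torsion-free. So there is a morphism $f^*K_p(\cV^{\geq 0})\to K_p(\cV_1^{\geq 0})$, an isomorphism over $f^{-1}(U)$.

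Dually this gives $K_p(\cV_1^{\geq 0})^\vee\to f^*\bigl(K_p(\cV^{\geq 0})^\vee\bigr)$, which is an isomorphism over the open set $f^{-1}(U)$ and in particular surjective there. Since $K_p(\cV_1^{\geq 0})^\vee$ is weakly positive by \corollaryref{WP_log}, \lemmaref{wplemma}(1) shows that $f^*\bigl(K_p(\cV^{\geq 0})^\vee\bigr)$ is weakly positive over $f^{-1}(U)$, hence weakly positive. Finally, $f$ is finite, so \lemmaref{wplemma}(3) yields that $K_p(\cV^{\geq 0})^\vee$ is itself weakly positive.

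The main obstacle I anticipate is the bookkeeping around torsion and the precise behavior of $\gr^F$ under the non-flat-looking inclusion of Corollary~\ref{hodf}: one must check that the induced map $f^*\gr_p^F\cV^{\geq 0}\to\gr_p^F\cV_1^{\geq 0}$ is genuinely well defined and compatible with both Kodaira–Spencer maps, and that after dualizing the resulting map of reflexive/torsion-free sheaves really is surjective (not merely generically so) on $f^{-1}(U)$ — which is automatic there since it is an isomorphism, but needs the cover to be chosen so that $f^{-1}(U)$ is dense, as it is. Once the diagram is set up correctly, the positivity transport via \lemmaref{wplemma} is formal.
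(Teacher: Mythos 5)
Your argument is correct and is essentially the paper's own proof: both pass to the Kawamata unipotent cover, use Corollary~\ref{hodf} to map (the pullback of) $K_p(\cV^{\geq 0})$ into $K_p(\cV_1^{\geq 0})$ via a torsion-freeness argument on the commutative square of Kodaira--Spencer maps, invoke Corollary~\ref{WP_log} upstairs, and descend with Lemma~\ref{wplemma}(1) and (3). The only cosmetic difference is that the paper factors through the intermediate kernel $\mathrm{Ker}\,\theta'_p$ rather than constructing the map $f^*K_p(\cV^{\geq 0})\to K_p(\cV_1^{\geq 0})$ directly, and your identification of $(f^*K_p)^\vee$ with $f^*(K_p^\vee)$ is, as in the paper, only needed generically, which suffices for weak positivity.
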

\begin{proof}
As in the proof of  \corollaryref{WP_log},  we consider the natural diagram
$$
\begin{tikzcd}
f^*{\rm gr}_p^F \cV^{\geq 0}  \rar{f^*\theta_p} \drar{\theta^\prime_p} &  f^*{\rm gr}_{p+1}^F \cV^{\geq 0} \otimes f^* \Omega_X^1({\rm log}~D)  \dar \\
& f^*{\rm gr}_{p+1}^F \cV^{\geq 0} \otimes \Omega_{Y}^1({\rm log}~E)
\end{tikzcd}
$$
By \corollaryref{hodf} we have an inclusion 
$$\text{Ker} ~\theta'_p \subseteq K_p (\cV_1^{\geq 0})$$
which is generically an isomorphism. Using \corollaryref{WP_log} for $\cV_1^{\geq 0}$, together with \lemmaref{wplemma}(1), it follows that $(\text{Ker} ~\theta'_p)^\vee$ is weakly positive, hence so is 
$(\text{Ker} ~f^*\theta_p)^\vee$, again by \lemmaref{wplemma}(1). Therefore $K_p (\cV^{\geq 0})^\vee$ is also weakly positive because of \lemmaref{wplemma}(3).
\end{proof}

Using the notation $K_p (M)$  for the kernels associated to the underlying filtered $\Dmod$-module, we can now deduce the main result in the setting of this section:

\begin{corollary} \label{wp}
$K_p (M)^{\vee}$ is weakly positive for any $p$.
\end{corollary}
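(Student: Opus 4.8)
The plan is to reduce the statement to \theoremref{WP_Deligne} by comparing the Kodaira--Spencer kernel $K_p(M)$ of the filtered $\Dmod_X$-module underlying $M$ with the logarithmic kernel $K_p(\cV^{\geq 0})$, and then applying \lemmaref{wplemma}(1). The comparison goes through the localization $j_*j^{-1}M = (\cV(*D), \fddot, j_*\V_\Q)$ together with Saito's filtered quasi-isomorphism \eqref{qiso1}: passing to $\gr^F_p$ and taking the lowest cohomology sheaf $\cH^{-n}$ (cf. \eqref{left_coh}), \eqref{qiso1} identifies $\cH^{-n}\gr^F_p\DR(j_*j^{-1}M)$ with $K_p(\cV^{\geq 0})$.

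First I would use that the adjunction morphism $M \to j_*j^{-1}M$ is a morphism of (mixed) Hodge modules, hence in particular a morphism of filtered $\Dmod_X$-modules; on underlying $\Dmod$-modules it is just the inclusion $\Dmod_X\cV^{>-1} \hookrightarrow \cV(*D)$. Applying the filtered de Rham functor, passing to $\gr^F_p$, and taking $\cH^{-n}$ then yields for every $p$ an $\shO_X$-module homomorphism
\[ \varphi_p \colon K_p(M) \longrightarrow \cH^{-n}\gr^F_p\DR(j_*j^{-1}M) \simeq K_p(\cV^{\geq 0}), \]
the last identification being \eqref{qiso1}. Next I would check that $\varphi_p$ is an isomorphism over $U$: there $D$ is empty, and $M$, $j_*j^{-1}M$ and $\cV^{\geq 0}$ all restrict to $(\cV, \fddot)$ with its connection and the same Kodaira--Spencer maps, while \eqref{qiso1} restricts to the identity of the de Rham complex of $\cV$; since $\cH^{-n}$ commutes with restriction to the open set $U$, $\varphi_p|_U$ is an isomorphism. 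Dualizing, $\varphi_p^\vee \colon K_p(\cV^{\geq 0})^\vee \to K_p(M)^\vee$ is a morphism of torsion-free coherent sheaves which is surjective over $U$. By \theoremref{WP_Deligne}, $K_p(\cV^{\geq 0})^\vee$ is weakly positive over some open set, which after intersecting with $U$ we may take to be contained in $U$; then \lemmaref{wplemma}(1) shows that $K_p(M)^\vee$ is weakly positive over that set, hence weakly positive.

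I do not expect a genuine obstacle: the positivity content is entirely in \theoremref{WP_Deligne}, and what remains is bookkeeping supplied by Saito's theory — that $M \to j_*j^{-1}M$ is a filtered morphism, so that it induces a map of filtered de Rham complexes, and that \eqref{qiso1}, phrased via the Deligne extension $\cV^{\geq 0}$, is compatible with the presentation $j_*j^{-1}M = (\cV(*D), \fddot, j_*\V_\Q)$; this last point is harmless, since the localization depends only on $\cV(*D)$, not on the lattice used to present it. The one thing to get right is that $\varphi_p$ is an honest $\shO_X$-linear morphism restricting to an isomorphism over $U$, as this is precisely what allows \lemmaref{wplemma}(1) to close the argument after dualizing.
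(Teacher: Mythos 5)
Your argument is correct and is essentially the paper's own proof: both identify $K_p(\cV^{\geq 0})$ with $K_p(j_*j^{-1}M)$ via the filtered quasi-isomorphism \eqref{qiso1}, use the natural morphism $K_p(M)\to K_p(j_*j^{-1}M)$ coming from the localization map (an isomorphism over $U$), and conclude by dualizing, invoking \theoremref{WP_Deligne} and \lemmaref{wplemma}(1). The only difference is that you spell out more explicitly why the comparison morphism exists and restricts to an isomorphism over $U$, which the paper asserts as being true ``by definition.''
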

\begin{proof}\footnote{We thank both the referee and C. Schnell for suggesting this as a replacement for an earlier  
argument that needed more justification.}
The filtered quasi-isomorphism \eqref{qiso1} induces an isomorphism 
\[K_p (\cV^{\geq 0})  \simeq  K_p (j_*j^{-1}M). \] 
On the other hand, by definition there is a natural morphism
\[K_p (M) \longrightarrow K_p (j_*j^{-1}M)\]
which is an isomorphism over $U=X \smallsetminus D$.
Passing to duals, by \theoremref{WP_Deligne} and \lemmaref{wplemma}(1) we obtain that the dual of $K_p (M)$ 
is also weakly positive.
\end{proof}

\begin{remark}
Though not necessary for the argument, it is worth noting, as C. Schnell has pointed out to us, that using the $V$-filtration axioms for Hodge modules one can check (independently of the normal crossings hypothesis) 
that $K_p (M) \simeq K_p (j_*j^{-1}M )$ for all $p$ as well. Thus the morphism appearing the proof of \corollaryref{wp}
is in fact an isomorphism. 
\end{remark}

\subsection{General case}\label{general}
We now assume 
\[\bV=(\cV, \fddot, \V_\Q)\] 
to be a polarizable variation of Hodge structure with quasi-unipotent local monodromies, defined on open subset $U\subset X$ such that $D=X -  U$ is an arbitrary divisor. 
As in Section \ref{normal_crossings}, we denote the pure  Hodge module extension of $\bV$ by $M$. 

\begin{proof}[Proof of \theoremref{thm:WP}]
Suppose 
\[f: (X', E)\rarr (X, D) \] is a log resolution of the pair $(X, D)$ which is an isomorphism over $U$, with 
$E=(f^*D)_{\text{red}}$. We also consider the pure Hodge module $M'$ with strict support $X'$ extending $\bV$, this time seen as a variation of Hodge structure on $X' -  E$.

First, by the Stability and Decomposition Theorem for pure Hodge modules \cite[Th\'eor\`eme 5.3.1]{Saito-MHP}, 
we have that each $\cH^i f_*M'$ is a pure Hodge module on $X$, while  the underlying filtered 
$\Dmod_X$-modules satisfy
\[f_+ (\Mmod^\prime, F_{\bullet}) = \bigoplus_i \cH^i f_{+} (\Mmod', F_{\bullet}) [-i]\] 
in the derived category. (Here $f_+$ is the derived direct image functor for filtered left $\Dmod$-modules.)
Moreover, $M$ is a direct summand of $\cH^0 f_*M'$; it is in fact its component with strict support $X$.
By the commutation of direct images with the de Rham functor  \cite[\S2.3.7]{Saito-MHP} (see also \cite[Theorem 28.1]{Schnell2}), 
we have
\[\derR f_*\text{gr}^F_p\text{DR}(M')\simeq\text{gr}^F_p\text{DR}(f_*M')\simeq\bigoplus_i\text{gr}^F_p\text{DR}(\cH^i f_*M')[-i].\]
To simplify the notation, write
\[\shF:=\cH^{-n} \text{gr}^F_p\text{DR}(M'),\] and 
\[\shG:=\cH^{-n} \text{gr}^F_p\text{DR}(M).\] 
As we are looking at the lowest non-zero cohomology of the complexes in question, an easy argument involving the spectral sequence computing $\derR f_*\text{gr}^F_p\text{DR}(M')$  then shows:
\[f_*\shF \simeq \cH^{-n} \bigoplus_i\text{gr}^F_p\text{DR}(\cH^i f_*M')[-i]=\bigoplus_{i+j=-n}\cH^i \text{gr}^F_p\text{DR}(\cH^j f_*M').\] 
and therefore $\shG$ is a direct summand of $f_*\shF$. It suffices then to show that $(f_*\shF)^\vee$ is weakly positive. 
Since $\shF^\vee$ is weakly positive on $X^\prime$ by Corollary \ref{wp}, this is a consequence of the general Lemma \ref{WP_dual} below.\footnote{We thank the referee for suggesting this lemma and its proof; our original argument was unnecessarily complicated.} 
\end{proof}

\begin{lemma}\label{WP_dual}
Let $f \colon X^\prime \rightarrow X$ be a birational morphism of smooth projective varieties, and let $\shF$ be a coherent 
sheaf on $X^\prime$ such that $\shF^\vee$ is weakly positive. Then $(f_* \shF)^\vee$ is weakly positive as well. 
\end{lemma}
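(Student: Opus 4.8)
The plan is to reduce the statement to \lemmaref{wplemma} by finding a divisor $E$ supported on the exceptional locus of $f$ that ``absorbs'' the discrepancy between $f_*\shF$ and $\shF$ after dualizing. The natural candidate is a relative canonical divisor: since $f$ is a birational morphism of smooth projective varieties, there is an effective $f$-exceptional divisor $E$ with $\omega_{X'} \simeq f^*\omega_X \otimes \shO_{X'}(E)$. First I would replace $\shF$ by its double dual $\shF^{\vee\vee}$, which is reflexive, agrees with $\shF$ in codimension one, and has the same dual; so without loss of generality $\shF$ is reflexive, and in particular $\shF^\vee$ is reflexive too. The key point is then to compare $(f_*\shF)^\vee$ with $f_*(\shF^\vee \otimes \shO_{X'}(E))$, or some variant thereof, via duality for the proper morphism $f$.

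Concretely, I would argue as follows. Because $f$ is birational, there is an open subset $V \subseteq X$ with $\codim(X \smallsetminus V) \ge 2$ over which $f$ is an isomorphism; write $U' = f^{-1}(V)$. On $U'$ we have $f_*\shF \simeq \shF$ and hence $(f_*\shF)^\vee \simeq \shF^\vee$ compatibly. Now $(f_*\shF)^\vee = \shHom_{\shO_X}(f_*\shF, \shO_X)$ is reflexive, being the dual of a coherent sheaf on a smooth (hence locally factorial) variety; a reflexive sheaf is determined by its restriction to any open set whose complement has codimension $\ge 2$. Therefore it suffices to exhibit a coherent sheaf $\shG$ on $X$ with $\shG^\vee$ weakly positive — or more directly, to produce a surjection onto $(f_*\shF)^\vee$ over $V$ from something known to be weakly positive — and then invoke reflexivity together with \lemmaref{wplemma}(1). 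The cleanest route: set $\shG := f_*(\shF^\vee)$. There is a natural evaluation pairing $f_*\shF \otimes f_*(\shF^\vee) \to f_*(\shF \otimes \shF^\vee) \to \shO_X$, adjoint to a map $f_*(\shF^\vee) \to (f_*\shF)^\vee$; over $V$ this is an isomorphism. Since $\shF^\vee$ is weakly positive over some open set $W' \subseteq X'$, and weak positivity is preserved by pushforward along the birational $f$ by \lemmaref{wplemma}(2) (taking the exceptional divisor $E$ as above, after twisting — here one uses that weak positivity of $\shF^\vee$ implies weak positivity of $\shF^\vee \otimes \shO_{X'}(E)$ since $\shO_{X'}(E)$ is effective, hence a quotient argument or directly that tensoring by an effective divisor preserves weak positivity over the relevant open set), we get that $f_*(\shF^\vee)$ is weakly positive over $f(W')$. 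Finally, the map $f_*(\shF^\vee) \to (f_*\shF)^\vee$ is an isomorphism over $V$, and in particular surjective over a nonempty open set contained in $V \cap f(W')$; by \lemmaref{wplemma}(1), $(f_*\shF)^\vee$ is weakly positive.

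I would expect the main obstacle to be bookkeeping with the evaluation map $f_*(\shF^\vee) \to (f_*\shF)^\vee$: one must check it is an isomorphism on a large enough open set (this is immediate over the locus where $f$ is an isomorphism, which is co-dimension $\ge 2$-complemented, so it is harmless), and, more delicately, that the source is genuinely weakly positive and not merely weakly positive after a twist that one cannot remove. The twist by the effective exceptional divisor $\shO_{X'}(E)$ is the crux: one needs the elementary fact that if $\shA^\vee$ is weakly positive and $N$ is an effective divisor, then $(\shA \otimes \shO(-N))^\vee = \shA^\vee \otimes \shO(N)$ is still weakly positive over the same open set — which holds because $\shA^\vee \hookrightarrow \shA^\vee \otimes \shO(N)$ generically, no wait, the inclusion goes the wrong way; rather one uses that $\shA^\vee \otimes \shO(N)$ is a quotient of $\shA^\vee \otimes \shO(N)$... the honest statement is that weak positivity is inherited by quotients (\lemmaref{wplemma}(1)) and that $\widehat{S}^{\alpha\beta}(\shA^\vee \otimes \shO(N)) \otimes H^{\otimes\beta}$ contains $\widehat{S}^{\alpha\beta}\shA^\vee \otimes H^{\otimes\beta}$ twisted by an effective divisor, so global generation over $U$ of the latter — combined with the effectivity of $N$ — yields global generation over $U$ of the former at points away from $N$; adjusting $U$ accordingly. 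These are standard manipulations with Viehweg's definition, and I would carry them out after the structural argument above; the structural part (duality plus reflexivity plus \lemmaref{wplemma}) is where the real content lies.
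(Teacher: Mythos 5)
Your core argument---the natural map $f_*(\shF^\vee) \to (f_*\shF)^\vee$ induced by the evaluation pairing together with $f_*\shO_{X'} \simeq \shO_X$, the observation that this map is generically an isomorphism, and the combination of \lemmaref{wplemma}(2) applied to $\shF^\vee$ with \lemmaref{wplemma}(1)---is exactly the paper's proof. The surrounding material (relative canonical divisor, reflexive hulls, duality for proper morphisms, and the twist by an effective exceptional divisor) is unnecessary: \lemmaref{wplemma}(2) applies directly with $E = 0$ after shrinking the open set of weak positivity to avoid the exceptional locus, since $\shF^\vee$ is itself assumed weakly positive.
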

\begin{proof}
For any other sheaf $\shG$ on $X^\prime$, it is an immediate consequence of the definitions that there is a natural 
homomorphism of $\shO_X$-modules
$$f_* \mathcal{H}om_{X^\prime} (\shF, \shG) \longrightarrow \mathcal{H}om_X (f_*\shF, f_*\shG).$$
If in particular we take $\shG = \shO_{X^\prime}$, due to the fact that $f$ is birational morphism of smooth varieties
we have  $f_* \shO_{X^\prime} \simeq \shO_X$, and consequently this gives a homomorphism 
$$\varphi \colon f_* (\shF^\vee) \longrightarrow (f_* \shF)^\vee.$$
Since $f$ is generically an isomorphism, so is $\varphi$, and so Lemma \ref{wplemma}(1)  says that $(f_* \shF)^\vee$
is weakly positive provided $f_* (\shF^\vee)$ is so. But this holds by Lemma \ref{wplemma}(2).
\end{proof}

\noindent
{\bf Acknowledgement.} 
We are grateful to Christian Schnell for corrections and comments on a first draft. We also thank Junecue Suh for useful discussions, and to Yohan Brunebarbe for sharing a preprint of his work. Finally, our thanks go to the referee for extremely helpful 
corrections and suggestions for improvement.

\section*{References}
\begin{biblist}
\bib{Brunebarbe}{article}{
   author={Brunebarbe, Yohan},
   title={Symmetric differentials and variations of Hodge structure},
   title={Symmetric differential forms, variations of Hodge structures and fundamental groups of complex varieties},
   journal={Th\`ese de doctorat},
   date={2014},
}
\bib{Brunebarbe1}{article}{
   author={Brunebarbe, Yohan},
   title={Symmetric differentials and variations of Hodge structure},
   journal={to appear},
   date={2015},
}
\bib{EV}{book}{
   author={Esnault, H{\'e}l{\`e}ne},
   author={Viehweg, Eckart},
   title={Lectures on vanishing theorems},
   series={DMV Seminar},
   volume={20},
   publisher={Birkh\"auser Verlag},
   place={Basel},
   date={1992},
   pages={vi+164},
}
\bib{FF}{article}{
   author={Fujino, Osamu},
   author={Fujisawa, Taro},
   title={Variations of mixed Hodge structure and semi-positivity theorems},
   journal={Publ. Res. Inst. Math. Sci.},
   volume={50},
   date={2014},
   number={4},
   pages={589--661},
}
\bib{FFS}{article}{
   author={Fujino, Osamu},
   author={Fujisawa, Taro},
   author={Saito, Morihiko},
   title={Some remarks on the semi-positivity theorems},
   journal={Publ. Res. Inst. Math. Sci.},
   volume={50},
   date={2014},
   number={1},
   pages={85--112},
}
\bib{Fujita}{article}{
   author={Fujita, Takao},
   title={On K\"ahler fiber spaces over curves},
   journal={J. Math. Soc. Japan},
   volume={30},
   date={1978},
   number={4},
   pages={779--794},
}
\bib{HTT}{book}{
   author={Hotta, R.},
   author={Takeuchi, K.},
   author={Tanisaki, T.},
   title={D-modules, perverse sheaves, and representation theory},
   publisher={Birkh\"auser, Boston},
   date={2008},
}
\bib{Kas}{book}{
   author={Kashiwara, Masaki},
   title={D-modules and microlocal calculus},
   series={Translations of Mathematical Monographs,},
   volume={27},
   publisher={American Math. Soc.},
   date={2003},
}
\bib{Kawamata}{article}{
   author={Kawamata, Yujiro},
   title={Characterization of abelian varieties},
   journal={Compositio Math.},
   volume={43},
   date={1981},
   number={2},
   pages={253--276},
}
\bib{Kollar}{article}{
   author={Koll\'ar, J\'anos},
   title={Higher direct images of dualizing sheaves II},
   journal={Ann. of Math.},
   volume={124},
   date={1986},
   number={1},
   pages={171--202},
}
\bib{Popa}{article}{
      author={Popa, Mihnea},
	title={Kodaira-Saito vanishing and applications},
	journal={preprint arXiv:1407.3294}, 
	date={2014}, 
}
\bib{PS}{article}{
   author={Popa, Mihnea},
   author={Schnell, Christian},
   title={Viehweg's hyperbolicity conjecture for families with maximal variation},
   journal={preprint arXiv:1511.00294},
   date={2015},
}
\bib{Saito-MHP}{article}{
   author={Saito, Morihiko},
   title={Modules de Hodge polarisables},
	journal={Publ. Res. Inst. Math. Sci.},
   volume={24},
   date={1988},
   number={6},
   pages={849--995},
}
\bib{Saito-MHM}{article}{
   author={Saito, Morihiko},
   title={Mixed Hodge modules},
	journal={Publ. Res. Inst. Math. Sci.},
   volume={26},
   date={1990},
   number={2},
   pages={221--333},
}
\bib{Saito-Kollar}{article}{
   author={Saito, Morihiko},
   title={On Koll\'ar's conjecture},
   note={Several complex variables and complex geometry, Part 2 (Santa Cruz, CA, 1989)},
   journal={Proc. Sympos. Pure Math.},
   volume={52},
   date={1991},
   pages={509--517},
}
\bib{Schmid}{article}{
      author={Schmid, Wilfrid},
	title={Variations of Hodge structure: the singularities of the period mapping},
	journal={Invent. Math.}, 
	number={22},
	date={1973}, 
	pages={211--319},
}
\bib{Schnell1}{article}{
      author={Schnell, Christian},
	title={Weak positivity via mixed Hodge modules, {\rm to appear in Hodge Theory and Complex 
	Algebraic Geometry, Columbus 2013}},
	journal={preprint arXiv:1401.5654}, 
	date={2014}, 
}
\bib{Schnell2}{article}{
      author={Schnell, Christian},
	title={An overview of Morihiko Saito's theory of mixed Hodge modules},
	journal={preprint arXiv:1405.3096}, 
	date={2014}, 
}
\bib{Suh}{article}{
      author={Suh, Junecue},
	title={Vanishing theorems for mixed Hodge modules and applications},
	journal={preprint}, 
	date={2015}, 
}
\bib{Viehweg1}{article}{
      author={Viehweg, Eckart},
	title={Weak positivity and the additivity of the Kodaira dimension of certain fiber spaces},
	journal={Adv. Studies Pure Math.}, 
	number={1},
	date={1983}, 
	pages={329--353},
}
\bib{VZ}{article}{
      author={Viehweg, Eckart},
      author={Zuo, Kang},
	title={Base spaces of non-isotrivial families of smooth minimal models},
	note={in Complex Geometry (G\"ottingen, 2000)},
     journal={Springer, Berlin},
     date={2002},
     pages={279--328},
}
\bib{Wu}{article}{
      author={Wu, Lei},
	title={Vanishing and injectivity theorems for Hodge modules},
	journal={preprint arXiv:1505.00881}, 
	date={2015}, 
}
\bib{Zuo}{article}{
author={Zuo, Kang},
	title={On the negativity of kernels of Kodaira-Spencer maps on Hodge bundles and applications},
	journal={Asian J. Math.}, 
	number={4},
	date={2000}, 
	pages={279--302},
}
\end{biblist}

\end{document}